\def\sideremark#1{\ifvmode\leavevmode\fi\vadjust{\vbox to0pt{\vss
\hbox to 0pt{\hskip\hsize\hskip1em%
\vbox{\hsize2cm\tiny\raggedright\pretolerance10000%
\noindent {\color{red}{#1}}\hfill}\hss}\vbox to8pt{\vfil}\vss}}}%
\theoremstyle{plain}
\newtheorem{propn}{Proposition}[section]
\newtheorem{thm}[propn]{Theorem}
\newtheorem{lemma}[propn]{Lemma}
\theoremstyle{definition}
\newtheorem{defn}[propn]{Definition}
\theoremstyle{remark}
\newtheorem{rem}{Remark}
\numberwithin{equation}{section}
\begin{document}

\title{ Quasi-Einstein metrics on sphere bundles}
\author{Solomon Huang}
\address{Department of Mathematics, California State University Fullerton, 800 N. State College Bld., Fullerton, CA 92831, USA.}
\bigskip

\email{solomonhuang0703@csu.fullerton.edu}
\author{Tommy Murphy}
\email{tmurphy@fullerton.edu}
\urladdr{http://www.fullerton.edu/math/faculty/tmurphy/}
\bigskip

\author{Thanh Nhan Phan}
\email{tnp2424@csu.fullerton.edu}

\maketitle

\begin{abstract}
    
We adapt  the work of Hall to find quasi-Einstein metrics on sphere bundles over products of Fano K\"ahler--Einstein manifolds, as well as bundles where only one end is blown down.  
\end{abstract}

\section{Introduction}

In Wang--Wang\cite{ww}, Hermitian--Einstein manifolds were constructed via a Kaluza--Klein bundle ansatz, on (i) $\mathbb{S}^2$-bundles over products of Fano K\"ahler--Einstein manifolds, and (ii) $\mathbb{R}P^2$-bundles over products of Fano K\"ahler--Einstein manifolds.   In subsequent years (i) was generalized in various directions. The most relevant results  for our purposes are to be found in  \cite{wangsurvey}, where the extension of the Wang--Wang results to allow blowdowns  were presented.  Out of this came the results in  \cite{dw}, where a general construction of  Ricci solitons with blowdowns was presented.  

The focus of this paper is on the work of  Hall \cite{hall}  constructing solutions to the  quasi--Einstein metrics on $\mathbb{S}^2$-bundles over products of Fano K\"ahler--Einstein manifolds with blowdowns. The quasi--Einstein equation is a generalization of the Einstein condition.  An important application is that such metrics yield new Einstein metrics.  In particular, for  each quasi-Einstein  metric  one can construct   Einstein metrics on associated families of warped-product manifolds \cite{case}.  The metrics we construct generalize the first family of examples due to Wang--Wang. For compact manifolds, Hall's proof only works if \textit{both} ends are blown down. Our main contribution in this paper is to show Hall's proof can be adapted when one or both ends are not blown down.

To  fix notation and state the main result, we begin with a definition. 
\begin{defn} A compact Riemannian manifold $(M,g)$  is  \textit{quasi--Einstein} if it solves
\begin{equation}
\textrm{Ric}_g + \nabla^2u - \frac{1}{m} du\otimes du + \frac{\epsilon}{2}g = 0,
\end{equation}
where  $u\in C^{\infty}(M)$, $m\in [1, \infty)$ and $\epsilon$ is a constant. 
\end{defn}

Our main result is as follows. We will  explain notation and more concerning the constructions employed in the body of the paper, but we also refer the reader to  \cite{hall}  \cite{ww} for further details. \newpage

Although the setup is quite technical, in essence the symmetry assumptions allow us to reduce the problem to solving systems of ordinary differential equations.  Suppose $(M_i,h_i),\; i = 1, \ldots, r$, are Fano K\"ahler--Einstein manifolds with first Chern classes $c_1(M_i) = p_i\alpha_i$, where  $p_i\in \mathbb{N}$ and the $\alpha_i$ are indivisible classes. Let $M = M_1\times M_2\times\ldots\times M_r$ denote the Riemannian product of $(M_i, h_i)$, $\pi_i: M\rightarrow M_i$ denote the projection onto each factor, and  $p$ denote the $r$-tuple  $(p_1, \ldots p_r)$.   For the $r$-tuple of non-zero integers $q = (q_1, \ldots q_r)$ the integral cohomology class $\sum_{i=1}^r q_i\pi_i^*\alpha_i$ is the Euler class of a principal circle bundle $P_q$ over $M$.    Let $W^{p}_{q}$ denote the associated  $\mathbb{S}^2$-bundle over $M$ coming from the circle action on the Riemann sphere $\mathbb{S}^2$. 

\begin{thm}\label{maint}
With the above notation, 
\begin{enumerate}
    \item[(i)]  if $0 < |q_i|< p_i$ for $i = 1, \ldots r$, then $W^{p}_{q}$ admits non-trivial  quasi-Einstein metrics for any $m>1$, and
    \item[(ii)] if $|q_i|(n_1 + 1) < p_i$ for $2\leq i \leq r$  then there exist  non-trivial  quasi-Einstein metrics for any $m>1$ on the space obtained by blowing $W^{p}_{q}$ down at  the left-hand end and gluing in $\Pi_{m=2}^rM_i$.
    \end{enumerate}
\end{thm}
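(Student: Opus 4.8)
The plan is to follow the Kaluza--Klein reduction of Wang--Wang and Hall and cast the quasi-Einstein system as a two-point boundary-value problem for a system of ordinary differential equations. Writing the metric in cohomogeneity-one form
$$g = dt^2 + \sum_{i=1}^r a_i(t)^2\,\pi_i^* h_i + b(t)^2\,\theta^2,$$
where $\theta$ is the principal connection on $P_q$ with curvature representing $\sum_i q_i\,\pi_i^*\alpha_i$ and $t$ parametrises the $\mathbb{S}^2$-fibre between its two poles, and taking $u = u(t)$, the symmetry forces $\textrm{Ric}_g$, $\nabla^2u$ and $du\otimes du$ to be block-diagonal in the horizontal, vertical and radial directions. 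The quasi-Einstein equation then collapses to a coupled second-order system in $a_i$, $b$ and $u$. First I would record these equations using the standard curvature formulae for bundle metrics, treating the Fano--Kähler--Einstein Einstein constants, the $p_i$, and the Euler data $q_i$ as the only inputs coming from the base.

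The second step is the reduction at the heart of Hall's argument. Introducing the fibre-height coordinate $x\in[-1,1]$ in place of $t$, together with the conformal factor $w = e^{-u/m}$ that converts the quasi-Einstein condition into a warped-product Einstein condition (allowing us to borrow the curvature bookkeeping of the Wang--Wang and Dancer--Wang analyses), I would posit that each horizontal coefficient is affine in $x$, namely $a_i^2$ proportional to $p_i + q_i x$ after normalisation; the Kähler--Einstein compatibility between the horizontal and vertical blocks forces exactly this shape. Substituting back reduces the whole system to a single ODE for the vertical coefficient $\phi = b^2$ (equivalently for $w$), with the quasi-Einstein constant $\epsilon$ and the parameter $m$ entering as free constants. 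The expression $p_i + q_i x$ is positive on all of $[-1,1]$ precisely when $|q_i| < p_i$, which is where hypothesis (i) enters; under hypothesis (ii) we instead permit the coefficient of the collapsing $M_1$-factor to vanish at the left endpoint, and keeping the remaining $a_i^2$ positive after the blowdown and gluing of $\Pi_{i=2}^rM_i$ requires the sharper bound $|q_i|(n_1+1) < p_i$.

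The third step is to impose the endpoint conditions making $g$ and $u$ extend to a smooth compact manifold. At a pole where the circle fibre collapses (a non-blown-down end) one needs $\phi\to 0$ with the correct first derivative so that the $\mathbb{S}^2$-fibre caps off, together with $u'\to 0$ so that $u$ remains smooth across the pole; at the blown-down left end the collapsing direction is instead the $M_1$-factor, and the regularity condition there is governed by the gluing. With the affine ansatz these become explicit two-point conditions on the reduced ODE, and existence of a solution joining them for a suitable value of $\epsilon$ should follow either by explicit integration or by a shooting and continuity argument in $\epsilon$; non-triviality (non-constancy of $u$) is then read off from the prescribed boundary data.

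I expect the main obstacle to be exactly the feature distinguishing our result from Hall's: the smooth-capping analysis at a non-blown-down end. When both ends are blown down the $\mathbb{S}^2$-fibre never genuinely appears, so one sidesteps the delicate asymptotics at the circle collapse, whereas here the vertical coefficient $\phi$ and the potential $u$ must simultaneously achieve the correct boundary behaviour for $g$ and $u$ to extend smoothly over the collapsed circle. Showing that the reduced ODE still admits a solution meeting both prescribed endpoints once this extra constraint is imposed --- and that the stated inequalities on $(p_i,q_i)$ are precisely what guarantee the requisite sign and regularity conditions throughout $[-1,1]$ --- is the crux of the adaptation.
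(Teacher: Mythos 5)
Your overall framework (the cohomogeneity-one reduction, the substitution $v=e^{-u/m}$, a polynomial ansatz for the metric coefficients, and reduction to a two-point problem) is the same as the paper's, and you correctly locate the difficulty at the non-blown-down ends. But there are two genuine gaps. First, the affine ansatz $a_i^2\propto p_i+q_ix$ is the Wang--Wang \emph{Einstein} ansatz from \cite{ww}; in the quasi-Einstein setting the standard ansatz (Equation (2.10)) forces $\phi=v$ to be linear, $\phi=\kappa_1(s+\kappa_0)$, and then consistency forces the horizontal coefficients to be \emph{quadratic}, $\beta_i=A_i(s+\kappa_0)^2-\tfrac{q_i^2}{4A_i}$, with all the $A_i$ tied together through the single constant $E=\mu/\kappa_1^2=(8A_ip_i-\epsilon q_i^2)/(8A_i^2)$. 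The hypothesis $0<|q_i|<p_i$ enters through positivity of these quadratics and of $\alpha$ on the interval, not through positivity of $p_i+q_ix$ on $[-1,1]$.

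Second, and this is the crux, you leave the existence of a solution meeting both endpoint conditions to ``explicit integration or a shooting and continuity argument in $\epsilon$.'' The actual resolution is purely algebraic and is the paper's main new ingredient (Lemma 5.1): the single condition $\alpha(0)=0$ already forces $\alpha'(0)=2$, and substituting this into the transformed equation (2.7) together with the constraint on $E$ shows that $\kappa_0$ is a root of the explicit quadratic $\tfrac{1}{2}x^2+2x-E$; likewise $\alpha(s_*)=0$ makes $-s_*-\kappa_0$ the other root. This determines $\kappa_0$ and $s_*$ as functions of $E$ with no blowdown assumed, and one then checks that these values coincide with those produced by Hall's blowdown computation specialized to $n_1=n_r=0$ (Equations (4.2)--(4.3)), so that the remainder of Hall's analysis in \cite{hall} --- positivity of $\alpha$ and $\beta_i$ and smooth closure of the metric --- applies verbatim. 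Without that identification, a shooting argument would have to re-establish all of that positivity and regularity from scratch, which your outline does not address; part (ii) then follows because Lemma 5.1 is indifferent to whether a blowdown occurs at either end.
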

\noindent\textbf{Acknowledgments} The authors thank the Dept of Mathematics and the College of Natural Science and Mathematics at CSU Fullerton for supporting this research through a summer undergraduate research grant (SH), the deLand Fellowship (TNP), and a Junior Intramural Grant (TM). We thank the referee for  detailed and very helpful suggestions to improve our paper and Stuart Hall for many conversations related to this work. 

\section{Deriving the Quasi-Einstein equations}
We summarize the framework presented in \cite{hall}, \cite{ww}, where various changes of coordinates are performed to simplify the equations. The reader is referred to these works for further details concerning the objects in the construction. In essence, the manifold is of cohomogeneity one, which means there is an isometric group action whose principal orbits have codimension one.  The principal orbits, $P_q$, are circle bundles over the product manifold $M$ and are equipped with a  $U(1)$-connection $\theta$ with curvature $\sum_{i=1}^r q_i \pi_i^*\eta_i$, where $q_i\in \mathbb{Z}$ and $\eta_i$ is the K\"ahler form of $(M_i,h_i)$. The union of all principal orbits is denoted  $M_0 = (0,l)\times P_q$.  On $M_0$ we take the metric 
$$
g = dt^2 + f^2(t)\theta\otimes \theta + \sum_{i=1}^r g_i(t)^2 \pi^*h_i. 
$$
As we will focus on closed manifolds,  the key issue will be how to compactify this metric at $t=0$ and $t=l$.

\begin{lemma}
 Then the quasi-Einstein equations on $M_0$ are given by 
\begin{eqnarray}
    \frac{\ddot f}{f}+\sum _{i=1}^r 2n_i\frac{\ddot g_i}{g_i}+m\frac{\ddot v}{v}=\frac{\epsilon}{2}, \\
    \frac{\ddot f}{f}+\sum_{i=1}^r\left(2n_i\frac{\dot f\dot g_i}{fg_i}-\frac{n_iq_i^2}{2}\frac{f^2}{g_i^4}\right)+m\frac{\dot f\dot v}{fv} = \frac{\epsilon}{2}, \\
    \frac{\ddot g_i}{g}-\left(\frac{g_i}{g_i}\right)^2+\frac{\dot f\dot g_i}{fg_i}+\sum_{j=1}^r2n_i\frac{\dot g_i\dot g_j}{g_ig_j}-\frac{p_i}{g_i^2}+\frac{q_i^2f^2}{2g_i^4}+m\frac{\dot g_i\dot v}{g_iv}=\frac{\epsilon}{2}.
\end{eqnarray}
\end{lemma}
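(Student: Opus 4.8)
The plan is to follow the cohomogeneity-one computation of Wang--Wang and Hall, the only genuinely new ingredient being the potential term. The first step is to remove the quadratic nonlinearity in the defining equation by the substitution $v = e^{-u/m}$, so that $u = -m\log v$. A direct computation gives $\nabla^2 u - \frac{1}{m}\,du\otimes du = -m\,v^{-1}\nabla^2 v$, so that the quasi-Einstein equation of the Definition becomes the cleaner tensor identity
\[
\textrm{Ric}_g + \frac{\epsilon}{2}\,g = \frac{m}{v}\,\nabla^2 v .
\]
Since $g$ is cohomogeneity one and $v$ is invariant under the group action, both sides are diagonal in any frame adapted to the orbit decomposition, and the whole system collapses to matching three scalar functions of $t$.

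Second, I would introduce the orthonormal coframe $e^0 = dt$, $e^* = f\,\theta$ for the circle fibre, and $g_i\,\omega^i_a$ (with $a = 1,\dots,2n_i$) for each Kähler--Einstein factor $(M_i,h_i)$, where $\{\omega^i_a\}$ is a local $h_i$-orthonormal coframe. The structural inputs are the bundle identity $d\theta = \sum_i q_i\,\pi_i^*\eta_i$ together with the Fano Kähler--Einstein normalization in which the Ricci form of $h_i$ equals $p_i\eta_i$. By invariance the only nonzero Ricci components are the three diagonal blocks $\textrm{Ric}(\partial_t,\partial_t)$, $\textrm{Ric}(e_*,e_*)$ and $\textrm{Ric}(e^i,e^i)$ (the last being constant across the $2n_i$ directions tangent to $M_i$); all off-diagonal and cross-factor components vanish because the $\eta_i$ are of type $(1,1)$ and the factors are mutually orthogonal.

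Third, I would compute these three Ricci components from the structure equations, or equivalently specialize the known Einstein/soliton formulas in \cite{ww}, \cite{dw}. The radial block contributes the pure term $-\ddot f/f - \sum_i 2n_i\,\ddot g_i/g_i$; the fibre block produces the cross terms $-\sum_i 2n_i\,\dot f\dot g_i/(fg_i)$ together with the positive O'Neill correction $+\sum_i \tfrac{n_iq_i^2}{2}\,f^2/g_i^4$ from the bundle curvature; and the base block for $M_i$ produces $-\ddot g_i/g_i + (\dot g_i/g_i)^2 - \dot f\dot g_i/(fg_i) - \sum_j 2n_j\,\dot g_i\dot g_j/(g_ig_j)$, the base Einstein term $+p_i/g_i^2$, and the oppositely-signed correction $-\tfrac{q_i^2}{2}\,f^2/g_i^4$. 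The Hessian side is immediate from $v = v(t)$ and $\nabla_{\partial_t}\partial_t = 0$, giving $\nabla^2 v(\partial_t,\partial_t) = \ddot v$, $\nabla^2 v(e_*,e_*) = (\dot f/f)\dot v$ and $\nabla^2 v(e^i,e^i) = (\dot g_i/g_i)\dot v$. Substituting into the diagonalized identity and clearing the warping factors yields the three displayed equations.

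The main obstacle here is bookkeeping rather than conceptual: keeping the two bundle-curvature corrections with their correct opposite signs and distinguishing the factor $n_i/2$ appearing in the fibre equation from the factor $1/2$ in the base equation (the former is the sum over the $2n_i$ directions of $M_i$, the latter a single-direction contribution), and pinning down the normalization that makes the base Ricci contribute exactly $p_i/g_i^2$. All of these terms already appear in the cited constructions, so the derivation amounts to reproducing their Ricci formulas and appending the single Hessian contribution $m\,v^{-1}\nabla^2 v$.
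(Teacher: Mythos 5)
Your proposal is correct and follows exactly the derivation the paper relies on (the paper states this lemma without proof, deferring to \cite{hall} and \cite{ww}): the substitution $v=e^{-u/m}$ reduces the quasi-Einstein equation to $\mathrm{Ric}_g+\tfrac{\epsilon}{2}g=\tfrac{m}{v}\nabla^2 v$, and the three displayed equations are precisely the diagonal components of this identity in the frame adapted to the radial, fibre, and base directions, with the Ricci blocks as in the cited constructions. As a minor bonus, your (correct) base-direction formula, with $\left(\tfrac{\dot g_i}{g_i}\right)^2$ and $\sum_j 2n_j\tfrac{\dot g_i\dot g_j}{g_ig_j}$, identifies the evident typographical slips in the paper's Equation (2.3), where $\tfrac{\ddot g_i}{g}$, $\left(\tfrac{g_i}{g_i}\right)^2$, and $2n_i$ should read $\tfrac{\ddot g_i}{g_i}$, $\left(\tfrac{\dot g_i}{g_i}\right)^2$, and $2n_j$ respectively.
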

Here  $\cdot$ denotes the derivative with respect to $t$. Quasi-Einstein metrics satisfy a well-known additional constraint: 
\begin{lemma}[Kim-Kim \cite{kk}]
There exists a constant $\mu$ so that
\begin{align}
    v\ddot v+v\dot v\left(\frac{\dot f}{f}+\sum_i2n_i\frac{\dot g_i}{g_i}\right)+(m-1)\dot v^2-\frac{\epsilon}{2}v^2 = \mu.
\end{align}
\end{lemma}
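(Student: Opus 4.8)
The plan is to obtain the identity as a first integral of the quasi-Einstein system, following the original Kim--Kim argument, and then to translate it into the cohomogeneity-one coordinates. First I would pass to the unknown $v = e^{-u/m}$ that already appears in the equations. Writing $u = -m\log v$ and computing, the cross terms in $\nabla^2 u - \frac1m\,du\otimes du$ cancel identically, leaving the tensor identity $\nabla^2 u - \frac1m\,du\otimes du = -\frac{m}{v}\nabla^2 v$, so that the defining equation takes the clean form
\[
\textrm{Ric}_g + \frac{\epsilon}{2}\,g = \frac{m}{v}\,\nabla^2 v .
\]
Tracing this identity expresses the scalar curvature as $R = \frac{m}{v}\Delta v + \textrm{const}$, so that $\nabla_j R = \nabla_j\!\big(\tfrac{m}{v}\Delta v\big)$, which I will use below to eliminate $R$.

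The core step is to take the divergence of the tensor equation and match it against the contracted second Bianchi identity $\textrm{div}(\textrm{Ric}_g) = \tfrac12\,dR$. Expanding $\textrm{div}\big(\tfrac{m}{v}\nabla^2 v\big)$ produces the third-derivative term $\nabla^i\nabla_i\nabla_j v$, which I commute via the Ricci identity $\nabla^i\nabla_i\nabla_j v = \nabla_j\Delta v + R_{jk}\nabla^k v$; the curvature term is then re-expressed using the clean equation itself, and $\nabla_j R$ is replaced using the traced relation. After clearing denominators, every surviving term (in particular the two $|\nabla v|^2$ contributions combining with coefficient $m-1$) assembles into a single exact gradient,
\[
\nabla_j\!\left(v\,\Delta v + (m-1)\,|\nabla v|^2 - \frac{\epsilon}{2}\,v^2\right) = 0 .
\]
Hence the bracketed quantity is a constant, which I call $\mu$; conceptually this $\mu$ is the Einstein constant of the fibre in the associated warped-product Einstein metric of \cite{case}, which accounts for its appearance.

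It remains to specialize to the present ansatz, where every function depends only on $t$. Because $\partial_t$ is a unit field and $t$ is arc length transverse to the orbits, $|\nabla v|^2 = \dot v^2$, while the Laplacian of a function of $t$ alone is
\[
\Delta v = \ddot v + \dot v\,\frac{d}{dt}\log\!\big(f\,{\textstyle\prod_i} g_i^{2n_i}\big) = \ddot v + \dot v\Big(\frac{\dot f}{f} + \sum_i 2n_i\frac{\dot g_i}{g_i}\Big),
\]
the logarithmic derivative of the orbit volume density furnishing the mean-curvature term. Substituting these expressions into the conserved quantity yields exactly the stated identity.

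The hard part will be the divergence computation: one must arrange the Bianchi identity, the derivative-commutation term, and the trace substitution so that the a priori unrelated pieces telescope into one gradient. In this symmetric setting one could alternatively verify the identity by differentiating its left-hand side in $t$ and eliminating $\ddot f,\ddot g_i,\ddot v$ with the equations of the preceding Lemma, but that route is computationally heavier and obscures the geometric origin of $\mu$.
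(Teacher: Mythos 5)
Your derivation is correct, and it checks out in detail: the substitution $u=-m\log v$ does cancel the cross terms to give $\textrm{Ric}_g+\frac{\epsilon}{2}g=\frac{m}{v}\nabla^2 v$; taking the divergence, using $\textrm{div}(\textrm{Ric}_g)=\frac12 dR$, the commutation $\nabla^i\nabla_i\nabla_j v=\nabla_j\Delta v+R_{jk}\nabla^k v$, and the traced equation to eliminate $R_{jk}\nabla^k v$ and $\nabla_j R$, one indeed finds $\nabla_j\bigl(v\Delta v+(m-1)|\nabla v|^2-\frac{\epsilon}{2}v^2\bigr)=0$; and the specialization $\Delta v=\ddot v+\dot v\,(\log(f\prod_i g_i^{2n_i}))^{\textstyle\cdot}$, $|\nabla v|^2=\dot v^2$ reproduces the stated ODE identity. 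The paper does not prove this lemma at all: it is imported by citation from Kim--Kim, where it arises as the statement that the fibre of the associated warped-product Einstein metric must be Einstein with constant $\mu$. So your argument is not "the paper's proof" but a correct, self-contained reconstruction of the standard one (essentially the Case--Shu--Wei/Kim--Kim computation), plus the routine reduction to the cohomogeneity-one ansatz; the only thing you gain over the citation is making explicit why $\mu$ exists, and your closing observation that one could instead verify constancy by differentiating in $t$ and substituting the ODEs of the preceding lemma is a legitimate, if messier, alternative in this symmetric setting.
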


It is now standard to apply  the following change of coordinates on this system of differential equations. Let $s$ be the coordinate on $I=(0,l)$ such that $ds=f(t)dt,\;\alpha(s)=f^2(t),\;\beta_i(s)=g_i^2(t),\;\phi(s)=v(t)$ and $V=\Pi _{i=1}^{i=r}g_i^{2n_i}(t)$.  We choose our antiderivative so that $s$ ranges over the inverval $[0, s_*]$. 
\begin{propn}
Equations (2.1)--(2.4) transform under this change of coordinates into 
\begin{align}
    \frac{1}{2}\alpha '' + \frac{1}{2}\alpha' (\log V)'+\alpha \sum_{i=1}^rn_i\left(\frac{\beta''_i}{\beta_i}-\frac{1}{2}\left(\frac{\beta_i'}{\beta_i}\right)^2\right)+m\left(\frac{\alpha\phi''}{\phi}+\frac{\alpha'\phi'}{2\phi}\right)=\frac{\epsilon}{2}, \\
     \frac{1}{2}\alpha '' + \frac{1}{2}\alpha' (\log V)'-\alpha \sum_{i=1}^r\frac{n_iq_i^2}{2\beta_i^2}+m\frac{\alpha'\phi'}{2\phi}=\frac{\epsilon}{2},\\
    \frac{1}{2}\frac{\alpha'\beta_i'}{\beta_i}+\frac{1}{2}\alpha\left(\frac{\beta_i''}{\beta_i}-\left(\frac{\beta_i'}{\beta_i}\right)^2\right) + \frac{1}{2}\frac{\alpha\beta_i'}{\beta_i}\left(\log V\right)' -\frac{p_i}{\beta_i}+\frac{q_i^2\alpha}{2\beta_i^2}+m\frac{\alpha}{2}\frac{\beta_i'\phi'}{\beta_i\phi}=\frac{\epsilon}{2} \\
    \phi\left(\phi''\alpha+\frac{\phi'\alpha'}{2}\right)+\phi\phi'\left(\frac{\alpha'}{2}+(\log V)'\alpha\right)+(m-1)(\phi')^2\alpha-\frac{\epsilon}{2}\phi^2=\mu.
\end{align}
\end{propn}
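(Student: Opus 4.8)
The plan is to prove the Proposition by direct verification, since it asserts an identity between two systems of ODEs related by an explicit change of variables. The single fact driving every computation is that $ds = f\,dt$ forces $\frac{d}{dt} = f\,\frac{d}{ds}$, so that for any function $X$ of $t$ one has $\dot X = f X'$, where $\cdot$ and $'$ denote differentiation with respect to $t$ and $s$ respectively. First I would assemble a dictionary of the first- and second-order $s$-derivatives of the new variables in terms of the old. From $\alpha = f^2$ one gets $\alpha' = 2\dot f$ and $\alpha'' = 2\ddot f/f$; from $\beta_i = g_i^2$ one gets $\beta_i'/\beta_i = 2\dot g_i/(fg_i)$ together with $\frac{\beta_i''}{\beta_i} = \frac{2}{f^2}\bigl(\frac{\ddot g_i}{g_i} + \frac{\dot g_i^2}{g_i^2} - \frac{\dot g_i \dot f}{g_i f}\bigr)$; from $\phi = v$ one gets $\phi'/\phi = \dot v/(fv)$ and $\frac{\phi''}{\phi} = \frac{1}{f^2}\bigl(\frac{\ddot v}{v} - \frac{\dot v\dot f}{vf}\bigr)$; and from $V = \prod_i g_i^{2n_i}$ one gets $(\log V)' = \frac{2}{f}\sum_i n_i \dot g_i/g_i$.

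With this dictionary in hand I would substitute into the four transformed equations one at a time and check that each collapses to the corresponding equation from the two preceding Lemmas. The only subtlety is organizing the cancellations, and here the precise groupings in the statement are what make everything work. In the first transformed equation the combination $\frac{\beta_i''}{\beta_i} - \frac12\bigl(\frac{\beta_i'}{\beta_i}\bigr)^2$ is arranged so that the $\dot g_i^2$ terms cancel, leaving $\frac{2}{f^2}\bigl(\frac{\ddot g_i}{g_i} - \frac{\dot g_i\dot f}{g_i f}\bigr)$; after multiplying by $\alpha = f^2$ the residual cross term $-2\sum_i n_i \dot g_i\dot f/(g_i f)$ cancels exactly against the term produced by $\frac12\alpha'(\log V)'$, and the two $m$-terms $m\alpha\phi''/\phi$ and $m\alpha'\phi'/(2\phi)$ cancel their $\dot v\dot f$ contributions, leaving (2.1). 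The second equation is the most direct: $\frac12\alpha'(\log V)'$ reproduces $\sum_i 2n_i\,\dot f\dot g_i/(fg_i)$ and the remaining terms are purely algebraic substitutions yielding (2.2). The third is handled identically term by term, and for the transformed Kim--Kim constraint the grouping $\phi\bigl(\phi''\alpha + \tfrac12\phi'\alpha'\bigr)$ is designed so the $\dot v\dot f$ terms cancel and leave exactly $v\ddot v$, while $\phi\phi'\bigl(\tfrac12\alpha' + (\log V)'\alpha\bigr)$ supplies $v\dot v\bigl(\frac{\dot f}{f} + \sum_i 2n_i\frac{\dot g_i}{g_i}\bigr)$, recovering (2.4).

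The computation carries no conceptual obstacle; the work is entirely bookkeeping. The step most likely to cause trouble is the second derivative $\beta_i''$, since the product and quotient rules generate three terms and one must track the $\dot f/f$ factor correctly through the chain rule. Getting the coefficient of $\dot g_i\dot f/(g_i f)$ exactly right is precisely what makes the cross-term cancellations in the first two transformed equations succeed, so I would verify that coefficient with particular care. Once the derivative dictionary is established, each of the four identities follows by collecting terms and invoking the corresponding original equation.
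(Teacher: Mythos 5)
Your proposal is correct: the derivative dictionary ($\dot X = fX'$, hence $\alpha'=2\dot f$, $\alpha''=2\ddot f/f$, $\beta_i'/\beta_i = 2\dot g_i/(fg_i)$, the stated formula for $\beta_i''/\beta_i$, and $(\log V)'=\tfrac{2}{f}\sum_i n_i\dot g_i/g_i$) checks out, and the cancellations you identify are exactly the ones that reduce (2.5)--(2.8) to (2.1)--(2.4). The paper omits this computation as standard (deferring to Hall and Wang--Wang), and your direct term-by-term verification is precisely that intended argument.
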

Here $'$  denotes the derivative with respect to $s$. For the manipulations which follow,  is helpful to observe that 
$$
\sum_{i=1}^rn_i\frac{\beta_i'}{\beta_i}= \sum_{i=1}^rn_i(\log\beta_i)' = \left(\sum_{i=1}^r\log\beta_i^{n_i}\right)' = 
       \left(\log (\Pi_{i=1}^r g_i^{2n_i})\right)'= (\log V)'.
$$
Equating Equations (2.5) and (2.6) we obtain 
\begin{align}
    -m\frac{\phi''}{\phi}=\sum_{i=1}^{r}n_i\left(\frac{\beta''_i}{\beta_i}-\frac{1}{2}\left(\frac{\beta_i'}{\beta_i}\right)^2+\frac{q_i^2}{2\beta_i^2}\right).
\end{align} Following the standard ansatz we look for solutions that satisfy 
\begin{equation}\label{ansatz}
\frac{\beta''_i}{\beta_i}-\frac{1}{2}\left(\frac{\beta_i'}{\beta_i}\right)^2+\frac{q_i^2}{2\beta_i^2}=0.
\end{equation}
This forces $\phi$ to be linear, so set $\phi(s)=\kappa_1(s+\kappa_0)$ for some constants $\kappa_0,\kappa_1\in\mathbb{R}.$ Substituting $\phi$ into Equation (2.8) yields \begin{align}
    \alpha'+\left((\log V)'+\frac{(m-1)}{(s+\kappa_0)}\right)\alpha=\frac{\epsilon(s+\kappa_0)}{2}+\frac{\mu}{\kappa_1^2(s+\kappa_0)}.
\end{align}
It also easily follows from Equation (2.10) that $\beta_i$ is either quadratic or linear. As explained in \cite{hall}, consistency conditions between the various equations actually force  $\beta_i$ to be of the following form:
 $$\beta_i=A_i(s+\kappa_0)^2-\frac{q^2_i}{4A_i}.$$ Substituting this  into Equation (2.7); $$\alpha'+ \left((\log V)'+m(\log \phi)'-\frac{1}{s+\kappa_0}\right)\alpha =\frac{\epsilon}{2}(s+\kappa_0)+\frac{E}{s+k_0}$$ where $$E:=\frac{8A_ip_i-\epsilon q_i^2}{8A_i^2}.$$ Comparing with Equation (2.11), we see that for the solution to be consistent we must have $$\frac{\mu}{\kappa_1^2}=E=\frac{8A_ip_i-\epsilon q_i^2}{8A_i^2}.$$ Solving for $\alpha$ we observe that  notice that we have a first-order linear differential equation.  Using the integrating factor
\begin{align*}
I&= \textrm{exp}\left(\int (\log V)'+\frac{(m-1)}{s+\kappa_0}ds\right)\\
&=\textrm{exp}\left( \log V+(m-1)\log (s+\kappa_0)\right)\\
&=V(s+\kappa_0)^{m-1},
\end{align*}
the solution is 
$$\alpha(s)=V^{-1}(s+\kappa_0)^{1-m}\int_0^sV(r+\kappa_0)^{m-2}\left(E+\frac{\epsilon}{2}(r+\kappa_0)^2\right)dr.$$
\section{Extending to a smooth metric on a compact manifold}

The conditions $\alpha(0) = \alpha(s_*) = 0$ must be satisfied at both ends, since we wish to collapse the $\mathbb{S}^1$ fibre.  Assuming $\beta_i>0$ on $[0,s_*]$, a smooth collapse of the circle fibre yields a $\mathbb{S}^2$ bundle.  This corresponds to the original approach of \cite{ww}.  This requires that $\alpha'(0) = 2$ and $\alpha'(s_*) = -2$. See \cite{petersen} (section 1.4.4)
as well as  \cite{ww} for further discussion of this condition. 

Subsequently (see \cite{dw}) this construction was generalized to allow \emph{blowdowns}, which are more complicated singularities.   For blowdown at the left end (where $s=0$), we require that $M_1 = \mathbb{C}P^{n_1}$ is complex projective space with the Fubini Study metric. The vanishing of this factor simultaneously with the circle fiber yields, via the Hopf fibration, a sphere $\mathbb{S}^{2n+1}$ in the normal space.  To do so smoothly, we require $\beta_1(0) = 0$ and $\beta_1'(0) = 1$. See \cite{dw} for a discussion of this, especially  following Equation (4.17). In a similar way, to blowdown smoothly at the right-hand end (where $s=s_*$), we require $M_r = \mathbb{C}P^{n_2}$ is a complex projective space equipped with the Fubini--Study metric, and  $\beta_r(s_*) = 0$ and $\beta_r'(s_*) = -1$. In this more general framework, the case with no blowdowns corresponds to $n_1 = n_r = 0$. Here we just collapse the $\mathbb{S}^1$-fiber at both ends, which is of course the setting of  $\mathbb{S}^2$-bundles already discussed.

\section{Blowdowns in Hall's Proof}\label{blowdowns}
 Our proof follows the standard ansatz and that of \cite{hall}. Note $\epsilon = -1$ in Hall's notation.   In this section we explain the approach of Hall to compute $\kappa_0$ and $s_*$ To compactify $M_0$, Hall  firstly requires 
$$
\alpha(0) = 0, \ \alpha'(0) = 2, \ \beta_1(0) = 0, \ \text{and}\ \beta_1'(0) = 1.
$$
The conditions on $\beta_1$ show that a blowdown of $M_1$ at the left-hand end is assumed/required in Hall's proof. 
Next, 
$$
\beta_1'(0) = 1 \implies 2A_1\kappa_0 = 1  \implies A_1 = \frac{1}{2\kappa_0},
$$
and also 
\begin{align*}
    0 & = \beta_1(0)\\
    &= A_1\kappa_0^2 - \frac{q_1^2}{4A_1} \\
&= \frac{\kappa_0}{2}\left( 1 - q^2_1\right)
\end{align*}
which implies $q_1^2 = 1$. Since $p_1 = n_1 + 1$ we see the normalization conditions become
$$
E = \frac{\mu}{\kappa_1^2} = \frac{\kappa_0}{2}\left(4(n_1 + 1) + \kappa_0\right) = \frac{8A_ip_i + q_i^2}{8A_i^2} \ \text{for } 2\leq i \leq r.
$$
In particular, observe that $\kappa_0$ is determined via feeding $A_1 = \frac{1}{2\kappa_0}$ into the consistency condition, so this number can only be determined in Hall's framework if there is a blowdown at $s=0$. 

Finally, it remains to determine  $s_*$ (the length of the interval).  Hall writes that for the manifold to extend smoothly at the point $s=s_*$ the conditions $q_r=1$, $p_r = n_r+1,$ and $-1 = 2A_r(s_*+ \kappa_0)$ are imposed. As when $s=0$, these conditions result from assuming a non-trivial blowdown occurs at the right-hand end. Plugging these into the consistency conditions
$$
\frac{\kappa_0}{2}\left(4(n_1 + 1) + \kappa_0\right)  = E   = \frac{8A_rp_r + q_r^2}{8A_r^2} 
$$
which yields, following some elementary algebra, that
\begin{equation}\label{hall2}
s_* = \sqrt{\kappa_0(4(n_1 + 1) + \kappa_0) + 4(n_r + 1)^2} - \kappa_0 + 2(n_r + 1),
\end{equation}
as the reader may easily check. 

\begin{rem}
The case with no-blowdowns is important to analyse, since many important examples are obtained in this way. For instance, see Section 3.1   of \cite{hall}.
\end{rem}

Putting $n_1=n_r = 0$, we obtain the following equations which determine  $\kappa_0$ and $s_*$ as functions of $E$:
\begin{equation}\label{hall0}
E = \frac{\kappa_0}{2}\left( 4 + \kappa_0\right) = 2\kappa_0 + \frac{\kappa_0^2}{2}, 
\end{equation}
\begin{equation}\label{halls}
s_* =  \sqrt{\kappa_0(4+ \kappa_0) + 4} - \kappa_0 + 2.
\end{equation}

We choose to not fully simplify Equation (4.3) so the connection with the quadratic equation in the proof of Theorem \ref{maint} is clear.

\section{An Alternative Approach}

The main idea in this work is to find an alternative way of determining $\kappa_0$ and $s_*$ as functions of $E$ which does not depend upon the existence of non-trivial blowdowns. In \cite{ww}, for the Einstein equations the interval on which $\tilde{s}$ is defined, where $\tilde{s} = s+ \kappa_0$, is determined by showing both endpoints of the interval can be interpreted as roots of a certain quadratic. It turns out this also holds in the setting of the quasi-Einstein equations.

\begin{lemma} With the same notation as above; 
\begin{enumerate}
    \item If $\alpha(0) = 0$, then  $\kappa_0$ is a root of the quadratic $\frac{1}{2}x^2 + 2x - E.$
    \item  If $\alpha(s_*) = 0$, then  $-\kappa_0 - s_*$ is a root of the quadratic $\frac{1}{2}x^2 + 2x - E.$
\end{enumerate}
\end{lemma}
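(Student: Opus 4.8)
The plan is to bypass the explicit integral solution for $\alpha$ altogether and instead extract the boundary data directly from the first-order ODE (2.11). Writing that equation with the consistency value $\mu/\kappa_1^2 = E$ substituted in, it takes the form
$$\alpha' + \Big((\log V)' + \tfrac{m-1}{s+\kappa_0}\Big)\alpha = \tfrac{\epsilon}{2}(s+\kappa_0) + \tfrac{E}{s+\kappa_0}.$$
The observation on which everything rests is that at any point where $\alpha$ vanishes, the entire coefficient multiplying $\alpha$ drops out, so the ODE degenerates to the purely algebraic identity $\alpha'(s) = \tfrac{\epsilon}{2}(s+\kappa_0) + \tfrac{E}{s+\kappa_0}$. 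Both statements then reduce to evaluating this identity at the relevant collapse point and feeding in the appropriate smoothness value of $\alpha'$.

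For statement (1) I would work at the left-hand end. Recall from Section 3 that smoothly collapsing the circle fibre there imposes $\alpha(0)=0$ \emph{and} $\alpha'(0)=2$. Assuming the left factor genuinely collapses the circle (no blowdown at $s=0$), the product $V=\prod_i g_i^{2n_i}$ is smooth and strictly positive at $s=0$, so $(\log V)'$ is finite there and the coefficient term annihilates $\alpha(0)=0$. Setting $\epsilon=-1$ and $s=0$ in the degenerate identity gives $2=-\tfrac{\kappa_0}{2}+\tfrac{E}{\kappa_0}$; clearing the denominator $\kappa_0$ and rearranging yields $\tfrac12\kappa_0^2+2\kappa_0-E=0$, so $\kappa_0$ is a root of $\tfrac12 x^2+2x-E$.

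For statement (2) I would repeat the argument at the right-hand end, where the smooth-collapse conditions are $\alpha(s_*)=0$ and $\alpha'(s_*)=-2$. The same reasoning gives $-2=-\tfrac{s_*+\kappa_0}{2}+\tfrac{E}{s_*+\kappa_0}$. Setting $u=s_*+\kappa_0$ and clearing denominators produces $\tfrac12 u^2-2u-E=0$, and since $\tfrac12(-u)^2+2(-u)-E$ equals exactly this expression, the substitution $x=-u=-\kappa_0-s_*$ shows $-\kappa_0-s_*$ is a root of the same quadratic $\tfrac12 x^2+2x-E$. It is worth noting that the sign reversal of the smoothness datum between the two ends ($+2$ versus $-2$) is precisely what interchanges the root $\kappa_0$ with the root $-\kappa_0-s_*$, so the two collapse points account for the two roots of a single quadratic.

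The one step that genuinely requires care — and the part I expect to be the main obstacle — is justifying that the coefficient term vanishes at the endpoints, which is what makes the algebraic identity valid there. This hinges on $(\log V)'$ being finite at the collapse point, i.e. on that end being an honest circle collapse rather than a blowdown; at a genuine blowdown $(\log V)'$ diverges like $n_1/s$, the product with $\alpha$ no longer vanishes, and the limiting identity picks up an extra factor, recovering Hall's shifted relation $E=\tfrac{\kappa_0}{2}\big(4(n_1+1)+\kappa_0\big)$ of Section 4 rather than the clean quadratic. I would close by remarking that both computations can be cross-checked against the explicit solution $\alpha(s)=V^{-1}(s+\kappa_0)^{1-m}\int_0^s V(r+\kappa_0)^{m-2}\big(E-\tfrac12(r+\kappa_0)^2\big)\,dr$, since differentiating this product and evaluating at $s=0$ and $s=s_*$ reproduces the same two relations.
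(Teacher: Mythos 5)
Your proof is correct, and it takes a genuinely different route from the paper's. The paper derives the quadratic from Equation (2.7) (the equation along the $i$-th base factor): it evaluates (2.7) at $s=0$ using $\alpha(0)=0$, $\alpha'(0)=2$ to get $\frac{-p_i+\beta_i'}{\beta_i}(0)=-\frac{1}{2}$, then substitutes the explicit quadratic form of $\beta_i$ together with the consistency condition rewritten as $p_i = A_iE - \frac{q_i^2}{8A_i}$, and watches the $q_i^2$ terms cancel to leave $\frac{1}{2}\kappa_0^2+2\kappa_0-E=0$. You instead evaluate the first-order ODE (2.11) for $\alpha$ at the zero of $\alpha$, where the coefficient term drops out and the equation degenerates to $\alpha'=\frac{\epsilon}{2}(s+\kappa_0)+\frac{E}{s+\kappa_0}$; imposing $\alpha'=\pm 2$ then gives the two roots directly. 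Your approach is arguably cleaner: it avoids the $\beta_i$ ansatz and the consistency condition entirely, it makes transparent why the two endpoints give the two roots of a single quadratic (the sign flip in $\alpha'$), and your observation that a divergent $(\log V)'\sim n_1/s$ at a blowdown contributes the extra $2n_1$ that produces Hall's relation $E=\frac{\kappa_0}{2}(4(n_1+1)+\kappa_0)$ unifies Sections 4 and 5 in a way the paper does not make explicit. The only point to be careful about, which you correctly flag, is the finiteness of $(\log V)'$ at the collapse point in the no-blowdown case; note also that, like the paper, you need the smoothness datum $\alpha'(0)=2$ (respectively $\alpha'(s_*)=-2$) as an input beyond the bare hypothesis $\alpha(0)=0$, which the paper justifies by reference to Wang--Wang.
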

\begin{proof}
Assume $\alpha(0) = 0$ and noting, as per \cite{ww}, this directly implies $\alpha'(0) = 2$,  feed this into Equation (2.7) to obtain
$$
\frac{-p_i + \beta_i'}{\beta_i}(0) = -\frac{1}{2}.
$$
Rewriting our constraint as 
$$
p_i = A_iE - \frac{q_i^2}{8A_i}
$$
and, noting that $\beta_i'(0) = 2A_i\kappa_0$ and $\beta_i(0) = A_i\kappa_0^2 - \frac{q_i^2}{4A_i}$ transforms this equation to
$$
-EA_i  + 2A_i\kappa_0 = \frac{-1}{2}A_i\kappa_0^2
$$
and so $\kappa_0$ is a root of the quadratic as claimed.  The proof for the second claim is analogous, simply adjusting so that $\alpha'(s_*) = -2$, and hence is  left to the reader. 
\end{proof}

\section{The proof of Theorem \ref{maint}}

\begin{proof} (i)  We simply have to show the roots of the quadratic $\frac{1}{2}x^2 + 2x - E$ yield the same expressions for $\kappa_0$ and $s_*$ as those coming from
  Equations (4.2) and (4.3).  Then the rest of the proof in \cite{hall} goes directly through, and thus the existence of quasi-Einstein metrics on $\mathbb{S}^2$ bundles is established. 
  
 Applying the quadratic formula, we see $\kappa_0$ is  (the larger)  root of $\frac{1}{2}x^2 + 2x - E$ if and only if
 $$
\kappa_0 =  \frac{-4 + \sqrt{16 + 8E}}{2}
 $$
 which simplifies to  
 \begin{equation}\label{tm1}
 E = \frac{\kappa_0^2}{2} + 2\kappa_0.
 \end{equation}
 This agrees with Equation (4.2). Similarly, $-s_*- \kappa_0$ corresponds to the other root of the quadratic if, applying the quadratic formula,
\begin{align*}
- s_*-\kappa_0 &=  \frac{-4 - \sqrt{16 + 8E}}{2} \\
&= -2 - \frac{1}{2}\sqrt{16 + 16\kappa_0 +  4\kappa_0^2} \ \ \ \ \ \text{ by Equation (6.1)}\\
&= -2 - \sqrt{4 + 4\kappa_0 +\kappa_0^2} 
\end{align*}
 which  agrees with Equation (4.3). This suffices to establish the theorem.
 \bigskip
 
 (ii) The proof is completely analogous, once one realizes that Lemma 5.1 is independent of the existence of blowdowns.

\end{proof}

\begin{rem} We are forced to choose $\kappa_0$ to be the larger  and $-s_*-\kappa_0$  the smaller root. They cannot be the same root, as if they agreed $s_*=-2\kappa_0$, meaning $s+\kappa_0$ would vanish when $s = -\kappa_0$. This cannot happen as then $\alpha(s)$ would be undefined at this point. Furthermore  $-s_*-\kappa_0$  must be the smaller root to agree with Hall's setup.

As a sanity check, we remark that the length of the interval is 
$$
s_* = s_*+\kappa_0 - \kappa_0 = 2 + \frac{1}{2}\sqrt{16 + 8E} - \left(-2 +\frac{1}{2}\sqrt{16 + 8E}\right) =4.
$$
The formula obtained in \cite{hall} for blowdowns with $n_1=n_r$ is 
$s_* = 4\sqrt{n_1 + 1}$, so putting $n_1=0$ we see our answer agrees with this.
\end{rem}

\newpage

\end{document}